\tikzstyle{level 1}=[level distance=3cm, sibling distance=2cm]
\tikzstyle{level 2}=[level distance=3cm, sibling distance=2cm]
\tikzstyle{level 3}=[level distance=3cm, sibling distance=2cm]
\tikzstyle{level 4}=[level distance=3cm, sibling distance=2cm]
\tikzstyle{level 5}=[level distance=3cm, sibling distance=2cm]
\tikzstyle{ns} = [text width=5em]
\tikzstyle{end} = [circle, minimum width=3pt,fill, inner sep=0pt]
\newcommand{\norm}[1]{\left\lVert#1\right\rVert}
\theoremstyle{plain}
\newtheorem{Thm}{Theorem}
\theoremstyle{definition}
\newtheorem{Remark}[Thm]{Remark}
\renewcommand{\bar}{\overline}
\DeclareMathOperator{\rank}{rank}
\numberwithin{equation}{section}
\author[Brooks]{Jennifer Brooks}
\address{Brooks: Department of Mathematics, Brigham Young University,
Provo, UT, 84602 USA}
\email{jbrooks@mathematics.byu.edu}
\author[Curry]{Sean Curry}
\address{Curry: Department of Mathematics, Oklahoma State University, Stillwater, OK, 74078 USA}
\email{sean.curry@okstate.edu}
\author[Grundmeier]{Dusty Grundmeier}
\address{Grundmeier: Mathematics Department, Harvard University, Cambridge, MA, 02138 USA}
\email{deg@math.harvard.edu}
\author[Gupta]{Purvi Gupta}
\address{Gupta: Department of Mathematics, Indian Institute of Science, Bangalore, 560012}
\email{purvigupta@iisc.ac.in}
\author[Kunz]{Valentin Kunz}
\address{Kunz: Department of Mathematics, University of Manchester,
Manchester, UK}
\email{valentin.kunz@manchester.ac.uk}
\author[Malcom]{Alekzander Malcom}
\address{Malcom: Courant Institute of Mathematical Sciences,
New York University,
New York, NY 10012 USA}
\email{am12205@nyu.edu}
\author[Palencia]{Kevin Palencia}
\address{Palencia: Department of Mathematical Sciences,
Northern Illinois University,
De Kalb, IL 60115 USA}
\email{palencia@niu.edu}
\subjclass[2000]{Primary: 32H35 Secondary: 32H02 32V99}
\keywords{Group-invariant, Unitary groups, CR mappings, Sphere mappings}
\begin{document}
\title[Invariant Mappings]{Constructing Group-Invariant CR Mappings}

\thanks{This material is based upon work supported by the National Science Foundation under Grant Number DMS 1641020.}

\date{\today}

\begin{abstract}  
We construct CR mappings between spheres that are invariant under actions of finite unitary groups. In particular, we combine a tensoring procedure with D'Angelo's construction of a canonical group-invariant CR mapping to obtain new invariant mappings. We also explore possible gap phenomena in this setting. 
\end{abstract}

\maketitle

\section{Introduction} \label{s:intro}

Let $S^{2n-1}$ be the unit sphere in $\mathbb{C}^n$. When $N,n \geq 2$, a longstanding problem in several complex variables is to classify CR mappings from $S^{2n-1}$ to $S^{2N-1}$. Much is known about this problem (see \cites{D:book1,D:book2, F:survey} and the references therein). In particular, the solution depends on the assumed regularity. Forstneri{\v c} proved that a sufficiently smooth CR mapping between spheres must be rational. Thus, in this paper, we restrict our attention to rational mappings (see D'Angelo \cites{D:book1} for an extensive discussion of the rational case). When $N<n$, there is only the constant map. Alexander \cites{A:auto} proved in the equidimensional setting ($n=N$) that all non-constant maps are automorphisms and hence are spherically equivalent to the linear embedding. In fact, for $3\leq n \leq N < 2n-1$, all smooth CR maps are spherically equivalent to $z \mapsto  (z,\bf{0})$ (see \cites{H:lemma,HJ:gaps,W,Fa}).  These intervals of values for $N$ where no new maps appear are referred to as ``gaps". When $N=2n-1$ this first gap terminates. For example, when $n=3$ and $N=2n-1=5$, we have a {\it Whitney map} $$(z_1,z_2,z_3)\mapsto (z_1,z_2,z_1 z_3, z_2 z_3, z_3^2)$$ along with the linear embedding. There has been extensive progress studying rigidity and gap phenomena for sphere maps in recent years, and we refer the reader to \cites{BG, DL:complexity,E:sos,EHZ:rigid,GH:sos, H:lemma,HJ:gaps, HJY} and the references therein. As the codimension $N-n$ gets larger, there are many inequivalent maps. Thus it is natural to impose additional conditions. 

In this paper, we impose a natural symmetry condition (see \cites{DX1, DX2} for more on symmetries of CR mappings of spheres). Suppose $\Gamma$ is a finite subgroup of $U(n)$. We study non-constant, smooth, CR mappings $f: S^{2n-1} \to S^{2N-1}$ such that $f\circ \gamma = f$ for all $\gamma \in \Gamma$. In this setting, D'Angelo and Lichtblau \cites{DL:spaceforms, D:book2} proved that the only finite subgroups of $U(n)$ that admit non-constant, smooth, CR mappings to some sphere are equivalent to one of the following:
\begin{itemize}
    \item $\langle \omega I_n \rangle$ where $\omega$ is a primitive $p$-th  root of unity,
    \item $\langle \omega I_k \oplus \omega^2 I_{n-k} \rangle$ where $\omega$ is a primitive $p$-th root of unity for $p$ odd,
    \item $\langle \omega I_j \oplus \omega^2 I_{k} \oplus \omega^4 I_{n-j-k}\rangle$ where $\omega$ is a primitive 7-th root of unity;
\end{itemize}
we refer to such subgroups as {\it admissible subgroups.}
For any fixed-point-free, finite subgroup $\Gamma$, D'Angelo gives a construction of a canonical group-invariant CR mapping from a sphere to a hyperquadric. In general, it is difficult to determine the exact target hyperquadric (see \cites{G:thesis,G:sigpairs,GLW} for more in this direction). However, in the case of an admissible subgroup, the target is a sphere. Let $N(\Gamma)$ denote the dimension of the target of D'Angelo's map (see Section \ref{s:background} for a detailed construction). In the present paper, we fix an admissible subgroup $\Gamma$ and construct non-constant CR mappings $f: S^{2n-1}/\Gamma \to S^{2N-1}$. For each $\Gamma$, we then study possible minimal embedding dimensions $N$. In particular, we show that beyond some point there are no more gaps.

\begin{restatable}{Thm}{gaptermination}
\label{T:bound}
If $\Gamma$ is an admissible subgroup and $N \geq N(\Gamma)^2-2N(\Gamma)+2$, then there exists a smooth CR mapping $f:S^{2n-1}/\Gamma \to S^{2N-1}$ for which $N$ is the minimal embedding dimension.
\end{restatable}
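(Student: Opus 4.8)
The plan is to reduce the statement to a counting problem about monomial maps and then solve that counting problem by a Sylvester--Frobenius (``Chicken McNugget'') argument. Write $M=N(\Gamma)$ and recall that D'Angelo's canonical map $F_\Gamma\colon S^{2n-1}/\Gamma\to S^{2M-1}$ is a \emph{monomial} sphere map: each of its $M$ components is a nonzero constant multiple of a single $\Gamma$-invariant monomial of positive degree, and these $M$ monomials are distinct (this is exactly what the product construction in Section~\ref{s:background} produces, since $\Phi_\Gamma$ is a nonnegative combination of squared monomials with no constant term). For such maps the minimal embedding dimension equals the number of distinct monomials appearing with nonzero coefficient, by the reduction criterion recorded in Section~\ref{s:background}. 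Hence it suffices to produce, for each integer $N$ in the asserted range, a $\Gamma$-invariant monomial sphere map with exactly $N$ distinct terms. Every map I build will be monomial and $\Gamma$-invariant by construction, and a polynomial sphere map is automatically smooth and CR, so these are the only features that need tracking.

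The engine is a single parametrized tensoring operation applied to $F_\Gamma$. Given a $\Gamma$-invariant monomial sphere map $f$, choose a component $h$ of \emph{maximal degree}, write $f=(g,h)$, and for $\theta\in[0,\pi/2]$ set
\[
\Phi_\theta=\bigl(g,\ (\cos\theta)\,h,\ (\sin\theta)\,h\,F_\Gamma\bigr),
\]
where $h\,F_\Gamma=(h\psi_1,\dots,h\psi_M)$ tensors the single component $h$ with all $M$ components of $F_\Gamma$. Since $\norm{F_\Gamma}^2=1$ on the sphere, one checks immediately that $\norm{\Phi_\theta}^2=\norm{g}^2+|h|^2=\norm{f}^2=1$ there, and each new component is again an invariant monomial. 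Because $h$ has maximal degree and every $\psi_j$ has positive degree, each product $h\psi_j$ has strictly larger degree than any monomial already present in $f$, and the $M$ products are pairwise distinct; hence no collisions occur. At $\theta=\pi/2$ the term $h$ is deleted and $M$ new ones appear, so the number of distinct terms increases by $M-1$; for any $\theta\in(0,\pi/2)$ the component $h$ survives with nonzero coefficient while $M$ new terms appear, so the number of distinct terms increases by $M$. Thus from any such map I can produce maps with $M-1$ or with $M$ more terms, and iterating preserves distinctness because the maximal degree strictly increases at each step.

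Starting from $F_\Gamma$, which has $M$ terms, repeated application therefore realizes every integer of the form $M+s(M-1)+tM$ with $s,t\ge 0$ as a number of terms, hence as a minimal embedding dimension. The increments $M-1$ and $M$ are coprime consecutive integers, so by the Sylvester--Frobenius theorem the numerical semigroup they generate contains every integer $\ge (M-1)(M-2)$. Consequently every $N\ge M+(M-1)(M-2)=M^2-2M+2=N(\Gamma)^2-2N(\Gamma)+2$ is realized, which is precisely the asserted range; note that the threshold is exactly the Frobenius bound for the pair $\{M-1,\,M\}$, which is why the estimate is sharp in form.

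The step I expect to be the main obstacle is the minimality/reduction claim: I must be certain that each constructed map does not secretly factor through a lower-dimensional sphere, i.e.\ that its minimal embedding dimension really equals its number of terms and not something smaller. This is where the maximal-degree choice does the work, since it guarantees that at every stage the components remain scalar multiples of pairwise distinct monomials, so the monomial reduction criterion applies verbatim. The parametrized family is \emph{essential} rather than a convenience: the honest (endpoint) tensoring operations change the number of terms only by multiples of $M-1$, so on their own they never leave the residue class of $M$ modulo $M-1$; it is the interior value $\theta\in(0,\pi/2)$ that produces the coprime increment $M$ and thereby reaches every residue class. The remaining bookkeeping---verifying invariance of each component and that all coefficients remain nonzero for $\theta\in(0,\pi/2)$---is routine.
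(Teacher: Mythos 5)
Your proposal is correct and takes essentially the same approach as the paper: your $\theta$-parametrized tensoring is exactly the paper's pair of real-polynomial operators, with $\theta=\pi/2$ giving $(W\!g)(x)=g(x)+cx_1^d(-1+f_\Gamma(x))$ (full tensoring, increment $N(\Gamma)-1$) and interior $\theta$ giving the ``split'' $(V\!g)(x)=g(x)+\tfrac{c}{2}x_1^d(-1+f_\Gamma(x))$ (increment $N(\Gamma)$), followed by the identical Sylvester postage-stamp argument with the coprime pair $\{N(\Gamma)-1,\,N(\Gamma)\}$ and the same threshold $N(\Gamma)^2-2N(\Gamma)+2$. The only cosmetic difference is that you tensor at an arbitrary maximal-degree component while the paper tensors at the pure top-degree power $x_1^d$; both choices rule out monomial collisions for the same degree reasons.
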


Our result and proof are inspired by D'Angelo and Lebl's proof \cites{DL:complexity} of gap termination in the non-invariant setting. In particular, we follow their clever approach using tensoring and  the solution of the postage stamp problem. The new contribution of this paper is to apply their tensoring procedure in the group-invariant setting. When $n=2$, we can improve our bound for $N$.

\begin{restatable}{Thm}{dimtwoproof}
\label{T:dim2}
 If $\Gamma$ is an admissible subgroup and $N \geq 2N({\Gamma})-1$, then there is a smooth CR mapping $f:S^{3}/\Gamma \to S^{2N-1}$ for which $N$ is the minimal embedding dimension.
\end{restatable}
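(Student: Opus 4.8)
The plan is to construct, for each admissible $\Gamma$ and each $N \geq 2N(\Gamma)-1$, an explicit $\Gamma$-invariant CR map to $S^{2N-1}$ whose minimal embedding dimension is exactly $N$, mirroring the strategy behind Theorem~\ref{T:bound} but exploiting the special structure available when $n=2$. The key object is D'Angelo's canonical map $F_\Gamma : S^3/\Gamma \to S^{2N(\Gamma)-1}$; writing $d = N(\Gamma)$ for brevity, the essential tool is the tensoring operation, which takes a map $G$ to $S^{2M-1}$ and a unit vector $v$ and replaces a component (or a sum of components) by a tensor against $v$, producing a new sphere map whose target dimension increases in a controlled way while preserving $\Gamma$-invariance. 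The point is that tensoring a single component raises the embedding dimension by exactly one, so starting from $F_\Gamma$ one can reach any sufficiently large $N$ by a sequence of such moves, \emph{provided} one can also realize the small ``gap'' values between $d$ and $2d-1$.

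First I would recall the invariance and dimension bookkeeping for the tensor operation: if $G$ is $\Gamma$-invariant and we tensor on a $\Gamma$-invariant subspace of components, the result remains $\Gamma$-invariant, and the target dimension changes from $M$ to $M + (k-1)$ when tensoring $k$ of the components against a vector in $\mathbb{C}^k$ (more generally one adds the appropriate number of new slots). Second, I would set up the counting problem: beginning from the canonical map of dimension $d = N(\Gamma)$, I need to hit every integer $N$ in the range $[\,2d-1, \infty)$. The improvement over Theorem~\ref{T:bound} comes from the fact that in dimension $n=2$ the relevant monomials are indexed more simply (one-variable-like behavior after passing to the quotient coordinates $z_1^p, z_1 z_2, z_2^p$ or their analogues), so the arithmetic obstruction governing which dimensions are attainable reduces from a two-dimensional postage-stamp-type constraint to a one-dimensional one. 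Concretely, each tensoring step is additive and the reachable set of dimensions is an interval once one passes the threshold $2d-1$, rather than requiring the quadratic bound $d^2 - 2d + 2$.

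Third, and most delicately, I would verify that the minimal embedding dimension of the constructed map is \emph{exactly} $N$ and not smaller. This is the crux: producing \emph{a} map to $S^{2N-1}$ is easy, but one must rule out the existence of a linear-algebraic reduction (a unitary change of coordinates followed by projection) lowering the target below $N$. For this I would compute the rank of the Hermitian form associated to the squared norm $\|f\|^2$, equivalently the dimension of the span of the component functions as an image of the underlying polynomial identity $\|f\|^2 = 1$ on the quotient sphere; the minimal embedding dimension equals this rank. Tensoring is designed to increase this rank by exactly the predicted amount, and I would check that no accidental linear dependence is introduced among the new components, using the fact that the tensoring vector $v$ can be chosen generically.

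The hard part will be the last step: controlling the rank of $\|f\|^2$ so that minimality is genuinely achieved for every $N$ in the claimed range, including the boundary value $N = 2d-1$ where the interval of reachable dimensions first becomes complete. I expect the main obstacle to be ensuring that the small intermediate dimensions between $d$ and $2d-1$ are all realized as minimal embedding dimensions of some $\Gamma$-invariant map, since tensoring from $F_\Gamma$ naturally produces jumps, and filling in the gaps requires either a different base map or a more careful choice of which components to tensor. Once the interval $[d, 2d-1]$ is shown to be fully covered (or shown to be coverable from $2d-1$ onward by single-slot tensoring, which raises dimension by one and hence reaches every larger integer), the sharper bound $N \geq 2N(\Gamma)-1$ follows, and this one-dimensional additivity is precisely what replaces the quadratic postage-stamp bound of Theorem~\ref{T:bound}.
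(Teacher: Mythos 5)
There is a genuine gap, and it sits at the heart of your plan: the claim that ``tensoring a single component raises the embedding dimension by exactly one,'' and the ensuing conclusion that single-slot tensoring reaches every larger integer. That claim is true in the non-invariant setting, where one may tensor one component against the identity map $z=(z_1,z_2)$, but it is exactly what invariance forbids. If $g$ is $\Gamma$-invariant for, say, $\Gamma(p,1)=\langle\omega I_2\rangle$, every monomial component has total degree divisible by $p$; multiplying a single component by $z_1$ or $z_2$ destroys invariance. The only admissible tensoring moves replace a component (or half of it, after splitting) by its product with the canonical map $\phi_\Gamma$, and each such move adds up to $N(\Gamma)$ new components — never one. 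Without single-step increments your plan collapses back to the arithmetic of Theorem~\ref{T:bound} with increments $N(\Gamma)$ and $N(\Gamma)-1$, which yields only the quadratic postage-stamp bound, not $2N(\Gamma)-1$. You sense the difficulty yourself (``filling in the gaps requires either a different base map or a more careful choice of which components to tensor''), but the proposal never supplies the mechanism that makes the careful choice work.

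The missing device is an exact count of monomial collisions depending on \emph{which} term is tensored. In the real-polynomial picture, with $f_{p,1}(x,y)=(x+y)^p=\sum_k c_k x^{p-k}y^k$, one first forms $g_1=f_{p,1}+x^p(-1+f_{p,1})$, of rank $2N(\Gamma)-1$; then for $2\le j\le p$ one instead multiplies the term $c_j x^{p-j}y^j$ of $g_1$ by $f_{p,1}$. The resulting products $c_jc_k x^{2p-j-k}y^{j+k}$ coincide with monomials already present precisely when $k\le p-j$, so the new polynomial $g_j$ has rank exactly $2N(\Gamma)-2+j$; varying $j$ fills every residue class modulo $N(\Gamma)-1$ in the window from $2N(\Gamma)-1$ to $3N(\Gamma)-3$. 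Finally, tensoring repeatedly at the pure top power $x^{2p}, x^{3p},\dots$ adds exactly $N(\Gamma)-1$ each time (no collisions, as all new terms have strictly larger $x$-degree), which reaches every $N\ge 2N(\Gamma)-1$; the case $\Gamma(p,2)$ is parallel. Two smaller corrections: in the monomial framework, distinct monomial components are automatically linearly independent, so no ``generic choice of $v$'' is needed — the entire content is the collision count, not a genericity argument — and the interval between $N(\Gamma)$ and $2N(\Gamma)-1$ that you worry about covering is not part of the statement; whether those values occur as minimal embedding dimensions is left open in Section~\ref{s:future}.
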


We conclude the introduction by outlining the rest of the paper. In Section \ref{s:background}, we introduce D'Angelo's construction and the relevant background material. In Section \ref{s:construction}, we combine the canonical invariant mapping with a tensorial construction to generate new invariant mappings to spheres. In Section \ref{s:examples}, we present examples with $n=3$ that suggest a better bound is possible in Theorem \ref{T:bound}. Finally, in Section \ref{s:future}, we discuss future directions for this work. 

This project began as part of the Mathematics Research Community (MRC) program on ``New Problems in Several Complex Variables." We would like to thank the American Mathematical Society for creating this wonderful program and supporting our continued collaboration.

\section{Background and Setup} \label{s:background}

Let $Q(a,b)$ denote the hyperquadric with $a$ positive and $b$ negative terms in its defining equation; namely,
\begin{equation*}
    Q(a,b)=\left\{ z \in \mathbb{C}^{a+b}: \sum_{j=1}^a|z_j|^2-\sum_{j=a+1}^{a+b}|z_j|^2=1\right\}.
\end{equation*} Of course, $S^{2n-1}=Q(n,0)$.

Let $\Gamma$ be a fixed-point-free, finite subgroup of the unitary group $U(n)$. We define the following real-valued, $\Gamma$-invariant polynomial:
\begin{equation} \label{e:phi}
    \Phi_{\Gamma}(z, \bar{z}) = 1 - \prod_{\gamma \in \Gamma}{(1-\langle \gamma z, z\rangle)}.
\end{equation}
Expanding the product in \eqref{e:phi} gives $$\Phi_{\Gamma}(z,\bar{z})=\sum_{\alpha, \beta} {c_{\alpha\beta}z^{\alpha}\bar{z}^\beta}.$$ Because $\Phi_{\Gamma}$ is real-valued, the matrix of coefficients $(c_{\alpha \beta})$ is Hermitian. We define the {\it rank} of $\Phi_{\Gamma}$ to be the rank of this underlying coefficient matrix. Furthermore, we define $N^{+}(\Gamma)$ to be the number of positive eigenvalues of $(c_{\alpha \beta})$ and $N^{-}(\Gamma)$ to be the number of negative eigenvalues of $(c_{\alpha \beta})$. Define $N(\Gamma)= N^{+}(\Gamma)+N^{-}(\Gamma)$.

Diagonalizing the matrix of coefficients, we get
\begin{equation}
    \Phi_{\Gamma}(z,\bar{z}) =\| F(z) \|^2- \| G(z) \|^2
\end{equation}
where $F$ and $G$ are $\Gamma$-invariant, holomorphic polynomial mappings with linearly independent components. Therefore, from $\Phi_{\Gamma}$, we get an associated $\Gamma$-invariant CR mapping $$\phi_{\Gamma}=F \oplus G: S^{2n-1}/\Gamma \to Q(N^{+}, N^{-}).$$ We call this mapping the {\it canonical} group-invariant CR mapping associated with the subgroup $\Gamma$. When $N^{-}=0$, we get a non-constant CR mapping to the sphere $S^{2N(\Gamma)-1}$. 

We next reformulate our problem in terms of real polynomials. Suppose $\phi: \mathbb{C}^n \to \mathbb{C}^N$ and $\phi(S^{2n-1}) \subseteq S^{2N-1}$. Thus $\norm{\phi(z)}^2=1$ when $\norm{z}^2=1$. Furthermore, suppose $\phi$ is a monomial mapping (i.e. the components are monomials). Then $\phi(z)= (\ldots, c_{\alpha} z^{\alpha}, \ldots)$. Thus, $$\sum_{\alpha} |c_{\alpha}|^2|z^{\alpha}|^2=1 \text{ when } \sum_{j=1}^n|z_j|^2=1.$$ Setting $x_j=|z_j|^2$, we get $$\sum_{\alpha} |c_{\alpha}|^2x^{\alpha}=1 \text{ when } \sum_{j=1}^n x_j=1.$$ Thus we have a correspondence between monomial CR mappings $\phi$ of spheres and real polynomials $p$ with non-negative coefficients such that 
\begin{equation} \label{e:hyperplane}
    p(x_1, \ldots, x_n)=1 \text{ when } x_1+\dots+x_n=1.
\end{equation} If $\phi$ is additionally $\Gamma$-invariant, then $p$ is also $\Gamma$-invariant. In the monomial setting, we define $f_{\Gamma}(x_1, \ldots, x_n)$ to be the polynomial associated with the canonical mapping; namely,
\begin{equation*}
    f_{\Gamma}(x_1,\ldots, x_n)=f_{\Gamma}(|z_1|^2, \ldots, |z_n|^2)=\Phi_{\Gamma}(z,\bar{z}).
\end{equation*} Finally, observe that the rank of $\Phi_{\Gamma}$ is the number of independent monomials in $f_{\Gamma}$. 
We work with the real polynomials for the proofs of the main results.

We briefly pause to give examples. Suppose $n=2$. Then according to D'Angelo and Lichtblau, there are two groups we need to consider: $\langle \omega I_2 \rangle$ and $\langle \omega I_1 \oplus \omega^2 I_1 \rangle$. First, let $\Gamma(p,1)=\langle \omega I_2 \rangle$ where $\omega$ is a primitive $p$-th root of unity. The canonical mapping $\phi_{\Gamma(p,1)}: S^{3}/ \Gamma(p,1) \to S^{2(p+1)-1}$ is given by 
\begin{equation} \label{e:mapp1}
(z,w) \mapsto \left(z^p, \sqrt{\binom{p}{1}} z^{p-1}w, \cdots, \sqrt{\binom{p}{p-1}}z w^{p-1}, w^p\right).
\end{equation}
The corresponding real polynomial $f_{p,1}$ is given by
\begin{equation}
f_{p,1}(x,y):=f_{\Gamma(p,1)}(x,y)=\sum_{k=0}^p{\binom{p}{k}x^{p-k}y^k}=(x+y)^p.
\end{equation}

Next, let $\Gamma(p,2)= \langle \omega I_1 \oplus \omega^2 I_1 \rangle$ for $\omega$ a primitive $p$-th root of unity for $p$ odd. The canonical mapping $\phi_{\Gamma(p,2)}: S^{3} \to S^{p+2}$ is given by
$$(z,w) \mapsto \left(z^{p}, \sqrt{c_1} z^{p-2} w, \cdots, \sqrt{c_{\frac{p-1}{2}}} z w^{\frac{p-1}{2}}, w^p  \right)$$
where $$c_k=\frac{p}{p-k}\binom{p-k}{k}$$ for $1\leq k \leq \frac{p-1}{2}$.
As before, the corresponding real polynomial $f_{p,2}$ is given by 

\begin{equation}
f_{p,2}(x,y):= f_{\Gamma(p,2)}(x,y)=x^p + c_1 x^{p-2}y + \dots + c_{\frac{p-1}{2}}xy^{\frac{p-1}{2}}+y^p.
\end{equation}

When $n=3$, there are 4 inequivalent possibilities for $\Gamma$: $\langle \omega I_3 \rangle$ for $\omega$ any primitive $p$-th root of unity, $\langle \omega I_2 \oplus \omega^2 I_1 \rangle$ or $\langle \omega I_1 \oplus \omega^2 I_2 \rangle$ for $\omega$ a primitive $p$-th root of unity for $p$ odd, or $\langle \omega I_1 \oplus \omega^2 I_1 \oplus \omega^4 I_1 \rangle$ for $\omega$ a primitive 7-th root of unity. For the first case, we get $$f_{\langle \omega I_3 \rangle}(x_1,x_2,x_3)=(x_1+x_2+x_3)^p.$$ For cases 2 and 3, we get 
\begin{align*}
    f_{\langle \omega I_2 \oplus\omega^2 I_1 \rangle}(x_1,x_2,x_3)& =f_{p,2}(x_1+x_2,x_3)\\
    f_{\langle \omega I_1 \oplus\omega^2 I_2 \rangle}(x_1,x_2,x_3)& =f_{p,2}(x_1,x_2+x_3).
\end{align*}In the last case, we give the canonical mapping. Let $$\Gamma(7,2,4)=\langle \omega I_1 \oplus \omega^2 I_1 \oplus \omega^4 I_1 \rangle=\left\langle \begin{pmatrix} \omega & 0 & 0\\
0 & \omega^2 & 0\\
0 & 0 & \omega^4 \\
\end{pmatrix}\right\rangle$$ where $\omega$ is a primitive 7-th root of unity. The canonical mapping $\phi_{\Gamma(7,2,4)}: S^5 \to S^{33}$ associated to $\Gamma(7,2,4)$ is given by
\begin{align*}
    (z_1,z_2,z_3) \mapsto& (z_1^7, \sqrt{7} z_1^5 z_2, \sqrt{14} z_1^3 z_2^2, \sqrt{7} z_1 z_2^3, z_2^7, \sqrt{7} z_1^3 z_3,\\
    &\sqrt{14} z_1 z_2 z_3, \sqrt{7}z_1^2 z_2^4 z_3,\sqrt{7} z_2^5 z_3,\sqrt{7} z_1^4 z_2 z_3^2, \sqrt{7} z_1^2 z_2^2 z_3^2,\\
    &\sqrt{14} z_2^3 z_3^2,\sqrt{14} z_1^2 z_3^3, \sqrt{7} z_2 z_3^3,\sqrt{7} z_1 z_2^2 z_3^4, \sqrt{7} z_1 z_2^5, z_3^7),
\end{align*} 
with corresponding polynomial
\begin{align*}
f_{7,2,4}(x_1,x_2,x_3)&=x_1^7+ 7 x_1^5 x_2+ 14 x_1^3 x_2^2+ 7 x_1 x_2^3+ x_2^7+ 7 x_1^3 x_3+14 x_1 x_2 x_3\\
&+ 7 x_1^2 x_2^4 x_3+7 x_2^5 x_3+7 x_1^4 x_2 x_3^2+ 7 x_1^2 x_2^2 x_3^2+14 x_2^3 x_3^2+14 x_1^2 x_3^3\\&+7 x_2 x_3^3+7 x_1 x_2^2 x_3^4+ 7 x_1 x_2^5+ x_3^7.
\end{align*}

We conclude this section with an example of using tensoring to construct new maps from old maps. Consider $n=2$, $p=3$, and $q=2$. Then $$f_{3,2}(x,y)=x^3+3xy +y^3.$$ Now we multiply $y^3$ by $f_{3,2}(x,y)$ to get another $\Gamma(3,2)$-invariant polynomial with non-negative coefficients that is 1 on $x+y=1$. Thus, we get $$x^3+3xy +y^3(x^3+3xy +y^3)=x^3+3xy +x^3 y^3+3 xy^4+y^6,$$ and hence the corresponding CR mapping from $S^3$ to $S^9$ is given by $$(z,w)\mapsto (z^3, \sqrt{3} z w, z^3 w^3, \sqrt{3} z w^4, w^6).$$

\section{Proofs of the main results} \label{s:construction}

In this section, we prove Theorems~\ref{T:bound} and \ref{T:dim2}. In view of the correspondence discussed in Section~\ref{s:background}, we work entirely with real polynomials. Because the number of independent monomials in a real polynomial $g$ is the same as the number of independent components of the corresponding monomial mapping, this number is called the {\em rank} of $g$ and is denoted by $\rank(g)$. The proof of Theorem~\ref{T:bound} follows the D'Angelo and Lebl proof in the non-invariant setting from \cites{DL:complexity}.

\gaptermination*

\begin{proof} We show that for any $n\geq 2$, any finite subgroup $\Gamma$ of $U(n)$ in the D'Angelo--Lichtblau list, and any $N\geq N(\Gamma)^2-3N(\Gamma)+2$, there is a $\Gamma$-invariant, real polynomial with non-negative coefficients, satisfying \eqref{e:hyperplane}, and of rank $N$.

 Let $f_\Gamma$ denote the real polynomial corresponding to the squared-norm of the canonical CR mapping $\phi_\Gamma$. Then, there exists $p\in\mathbb N$ such that, in multi-index notation,
\begin{equation}
    f_\Gamma(x)=\sum_{|\alpha|\leq p} c_{\alpha} x^\alpha,
\end{equation}
where $c_{(p,0,...,0)}>0$. Note that $\rank(f_\Gamma)=N(\Gamma)$. 

Now, given a $\Gamma$-invariant, real polynomial $g$ of total degree $d$, satisfying \eqref{e:hyperplane} and containing a pure top-degree monomial in $x_1$, say $cx_1^d$, define 
\begin{eqnarray}
(V\!g)(x)&=&g(x)+\frac{c}{2}x_1^d\left(-1+
 f_\Gamma(x)\right),\\
(W\!g)(x)&=& g(x)+cx_1^d\left(-1+
 f_\Gamma(x)\right).
\end{eqnarray}
Then, $V\!g$ and $W\!g$ are $\Gamma$-invariant polynomials of degree $d+p$ that satisfy \eqref{e:hyperplane} and contain a pure top-degree monomial in $x_1$. Further, if $g$ has non-negative coefficients, so do $V\!g$ and $W\!g$, and
\begin{eqnarray*}
\rank(V\!g)&=&\rank(g)+N(\Gamma),\\
\rank(W\!g)&=&\rank(g)+N(\Gamma)-1.
\end{eqnarray*} 
Viewing $V$ and $W$ as operators, we use the notation $V^jW^kg$ to denote the $k$-times iterated application of $W$, followed by the $j$-times iterated application of $V$ to $g$. Thus,
\begin{equation*}
\rank(V^jW^kf_\Gamma)=jN(\Gamma)+k(N(\Gamma)-1)+N(\Gamma).
\end{equation*}

The result now follows by invoking Sylvester's solution to the postage stamp problem, which says that for any co-prime positive integers $A,B$, any integer $N\geq AB-A-B+1$ can be written as $jA+kB$ for some non-negative integers $j$ and $k$. In our case $A=N(\Gamma)$, $B=N(\Gamma)-1$, and, thus,  $AB-A-B+1=N(\Gamma)^2-3N(\Gamma)+2$. Hence, every rank beyond $N(\Gamma)^2-2N(\Gamma)+2$ is possible.
\end{proof}
\medskip

We now prove Theorem~\ref{T:dim2}.

\dimtwoproof*

\begin{proof} From \cites{DL:spaceforms}, we have that $\Gamma$ is equivalent to $\Gamma(p,1)$ or $\Gamma(p,2)$. From \cites{G:sigpairs}, we have that the minimal embedding dimension is preserved under a change of coordinates. Thus, we can assume without loss of generality that $\Gamma$ is either $\Gamma(p,1)$ or $\Gamma(p,2)$. 

For convenience, we switch our notation from $(x_1,x_2)$ to $(x,y)$ to denote the coordinates of a point in $\mathbb R^2$. First, suppose $\Gamma=\Gamma(p,1)$ for any fixed $p\geq 2$. In this case, $N(\Gamma)=p+1$, and the real polynomial corresponding to $\Phi_\Gamma$ is $$f_{p,1}(x,y)=(x+y)^p=\sum_{k=0}^{p} c_k x^{p-k}y^k,$$ 
where $c_k=\binom{p}{k}$. Let
\begin{equation}\label{eq:g11}
    g_1(x,y)=f_{p,1}(x,y)+x^p(-1+f_{p,1}(x,y))=\sum_{k=0}^{p}c_k x^{2p-k}y^k +\sum_{k=1}^{p}c_k x^{p-k}y^k.
\end{equation}
Then, $g_1$ is a $\Gamma$-invariant, real polynomial with non-negative coefficients, satisfying \eqref{e:hyperplane} and of rank $2p+1=2N(\Gamma)-1$.

Now, for each $j$ with $2\leq j\leq p$, we multiply the $(N(\Gamma)+j)$-th term of $g_1$ by $f_{p,1}$ to obtain
\begin{eqnarray*}
    g_j(x,y)&=& g_1(x,y)+c_jx^{p-j}y^j\left(-1+f_{p,1}(x,y)\right)\\
    &=&\sum_{k=0}^{p} c_k x^{2p-k}y^k+\sum_{k=0}^{p}c_j c_k x^{2p-j-k}y^{j+k} 
    +\sum_{\substack{k=1 \\ k \neq j}}^{p} c_k x^{p-k}y^k.
\end{eqnarray*}
Observe that the terms in the second sum on the last line above are monomials of the form $x^{2p-(j+k)}y^{j+k}$. When $j+k\leq p$, or, equivalently, $k\leq p-j$, such monomials also appear in the first sum on the last line above. Thus, collecting like terms, we see that for $j=2, \ldots ,p$, $g_j$ is a $\Gamma$-invariant, real polynomial with non-negative coefficients,  satisfying \eqref{e:hyperplane} and of rank $$(p+1)+j+(p-1)= 2N(\Gamma)-2+j.$$ We have obtained invariant polynomials of ranks $2N(\Gamma)-1,...,2N(\Gamma)-1+p-1=3N(\Gamma)-3$. 

For $N \geq 3N(\Gamma)-2$, we describe an iterative procedure for obtaining an invariant polynomial of rank $N$. For a fixed $N\geq 3N(\Gamma)-2$, let $j$ be the unique integer between $1$ and $N(\Gamma)-1$ such that $N -(2N(\Gamma)-2)\equiv j \pmod{N(\Gamma) - 1}$.  Thus
$$N=2N(\Gamma)-2+j+d(N(\Gamma)-1)$$
for some integer $d\geq 1$. We construct the desired polynomial iteratively. Let $g_j^{(0)}=g_j$. Let 
        $$
      g_j^{(1)}(x,y)=g_j^{(0)}(x,y)+x^{2p}(-1+f_{p,1}(x,y)).
    $$
Then, $g_j^{(1)}$ is an invariant polynomial, satisfying \eqref{e:hyperplane} and  consisting of exactly $2N(\Gamma)-2+j+N(\Gamma)-1$ independent monomials. This is because the polynomial $x^{2p}(f_{p,1}(x,y))$ consists of exactly $N(\Gamma)$ monomials, all of degree at least $2p$ in $x$, and all the terms of $g_j^{(0)}(x,y)-x^{2p}$ are of degree strictly less than $2p$ in $x$. Since $g_j^{(1)}$ contains the term $x^{3p}$, we can repeat this process. Repeating this process $d-1$ times gives an invariant polynomial satisfying \eqref{e:hyperplane}, and consisting of $N=2N(\Gamma)-2+j+d(N(\Gamma)-1)$ independent monomials. This completes the case of $\Gamma=\Gamma(p,1)$.

Next, suppose $\Gamma=\Gamma(p,2)$ for any fixed odd $p\geq 3$. The construction is quite similar to the previous case. In this case, the invariant real polynomial corresponding to $\Phi_\Gamma$ is
$$f_{p,2}(x,y)=\sum_{k=0}^{N(\Gamma) - 2} c_k x^{p-2k}y^k + y^p,$$
for some positive constants $c_k$. Once again, let
\begin{equation*}
g_1(x,y)=f_{p,2}(x,y)+x^p(-1+f_{p,2}(x,y))=\sum_{k=0}^{N(\Gamma)-2}c_k x^{2p-2k}y^k + x^py^p+\sum_{k=1}^{N(\Gamma)-2}c_k x^{p-2k}y^k +y^p .
\end{equation*}
Then $g_1$ is a $\Gamma$-invariant, real polynomial with non-negative coefficients, satisfying \eqref{e:hyperplane} and of rank $2N(\Gamma)-1$. Now, for each $j$ with $1 \leq j \leq N(\Gamma)-2$, let
\begin{eqnarray}
    g_{j+1}(x,y)&=& g_1(x,y)+c_jx^{p-2j}y^j\left(-1+f_{p,2}(x,y)\right)\notag\\
    &=&\sum_{k=0}^{N(\Gamma)-2} c_k x^{2p-2k}y^k+x^py^p+\sum_{k=0}^{N(\Gamma)-2}c_j c_k x^{2p-2j-2k}y^{j+k} \label{e:sums}\\
    &&{}+ c_j x^{p-2j}y^{p+j}+\sum_{\substack{k=1 \\ k \neq j}}^{N(\Gamma)-2} c_k x^{p-2k}y^k +y^p.\notag
\end{eqnarray}
Once again, the second sum in \eqref{e:sums} involves monomials of the form $x^{2p-2(j+k)}y^{j+k}$ that coincide with the monomials occuring in the first sum whenever $j+k \leq N(\Gamma)-2$, or $k \leq N(\Gamma)-2-j$. Thus, for $j=1,..,N(\Gamma)-2$, $g_{j+1}$ is a $\Gamma$-invariant, real polynomial with non-negative coefficients, satisfying \eqref{e:hyperplane} and of rank
$$N(\Gamma)+\left(N(\Gamma)-1)-(N(\Gamma)-1-j\right)+1+N(\Gamma)-3+1=2N(\Gamma)-1+j.$$ We have produced invariant polynomials of ranks $2N(\Gamma)-1,...,3N(\Gamma)-3$. For $N\geq 3N(\Gamma)-2$, we apply the same procedure as in the case of $\Gamma(p,1)$.
\end{proof}

\begin{Remark}
The bound found in Theorem 2 can be written as $N \geq 2p+1$ for $\Gamma = \langle \omega I_2\rangle$ and $N\geq p+2$  for $\Gamma = \langle \omega I_1 \oplus \omega^2 I_1 \rangle$.
\end{Remark}

\section{Discussion and Examples} \label{s:examples}

Theorem 1 gives a rather simple argument to show that beyond a certain critical value, all natural numbers $N$ are possible minimal embedding dimensions for group-invariant sphere maps.  However, the bound on $N$ is likely far from optimal. Theorem 2 is a refinement for the two-dimensional case, showing that all $N \geq 2N(\Gamma)-1$ are minimal embedding dimensions. It is natural to ask whether this better result holds for $n>2$.  Although we do not answer this question in this paper, we give some examples for $n=3$ that indicate that many of the apparent gaps in possible minimal embedding dimension can be filled using other more sophisticated schemes. 

Consider first $\Gamma=\langle \omega I_2 \oplus \omega^2 I_1\rangle$ for $\omega$ a primitive cube root of unity. The canonical $\Gamma$-invariant polynomial $f_{3,2}(x_1+x_2,x_3)$ has rank $7$. Now, if $g$ is any $\Gamma$-invariant polynomial of rank $\rho$ with non-negative coefficients taking the value $1$ when $x_1+x_2+x_3=1$, then if we multiply the term with the highest degree in $x_1$ by $f_{3,2}(x_1+x_2,x_3)$, we obtain a new polynomial satisfying these same conditions and having rank $\rho + 6$. Thus once we obtain such a polynomial of a certain rank, we can always obtain a polynomial satisfying these same conditions and having any larger rank that is in the same congruence class modulo 6. We therefore need only describe how to construct one polynomial with rank in each of the $6$ congruence classes.   

As in the proofs above, we use two kinds of operations on a polynomial $g$. For the first, we take a term of $g$ and simply multiply it by $f_{3,2}(x_1+x_2,x_3)$.  For the second, we take a term of $g$ and ``split" it, writing it as a sum of two identical monomials.  We then multiply just one of these terms by $f_{3,2}(x_1+x_2,x_3)$. The number of terms introduced by each of these operations depends on the monomial at which we perform the operation because in general some of the terms generated will combine with terms already appearing in the polynomial.  

Using these operations, we construct a sequence of $\Gamma$-invariant polynomials taking the value $1$ when $x_1+x_2+x_3=1$ and having ranks in each of the congruence classes modulo $6$ as follows: Begin with $f_{3,2}(x_1+x_2,x_3)$.  Next, either multiply by $f_{3,2}(x_1+x_2,x_3)$ at $x_1^3$ or ``split" and multiply by $f_{3,2}(x_1+x_2,x_3)$ at $x_1^3$ to obtain polynomials with ranks 13 and 14, respectively.  We then take these polynomials and multiply by $f_{3,2}(x_1+x_2,x_3)$ at the monomial $x_1^2 x_2$ to obtain polynomials with ranks 15 and 16, and so on. Thus, together with the process described above, we obtain polynomials of all ranks greater than or equal to $13 = 2N(\Gamma)-1$. See Figure 1. 
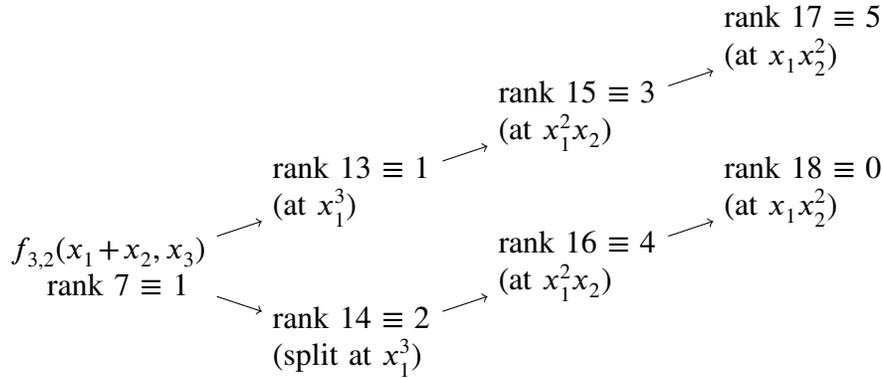
\begin{figure}[h]
\centering
\begin{tikzpicture}[grow=right]
\node[ns] {\hspace{-18pt} $f_{3,2}(x_1+x_2,x_3)$ rank~7~$\equiv$~1}
    child[->] {
        node[ns] {rank~14~$\equiv$~2 (split~at~$x_1^3$)}        
            child[edge from parent/.style={draw=none}] {
            }
            child {
                    node[ns] {rank~16~$\equiv$~4 (at~$x_1^2 x_2$)}        
                    child[edge from parent/.style={draw=none}] {
                    }
                    child {
                        node[ns] {rank~18~$\equiv$~0 (at~$x_1 x_2^2$)}
                    }
            }
    }
    child[->] {
        node[ns] {rank~13~$\equiv$~1 (at~$x_1^3$)}        
            child[edge from parent/.style={draw=none}] {
            }
            child {
                    node[ns] {rank~15~$\equiv$~3 (at~$x_1^2 x_2$)}        
                    child[edge from parent/.style={draw=none}] {
                    }
                    child {
                        node[ns] {rank~17~$\equiv$~5 (at~$x_1 x_2^2$)}
                    }
            }
    };
\end{tikzpicture}
\caption{Possible ranks for polynomials invariant under $\Gamma = \langle \omega I_2 \oplus \omega^2 I_1 \rangle$ for $\omega$ a primitive cube root of unity}
\end{figure}

We go through a similar process to obtain polynomials invariant under the action of $\Gamma =\langle \omega I_2 \oplus \omega^2 I_1 \rangle$ for $\omega$ a primitive fifth root of unity. We now begin with $f_{5,2}(x_1+x_2,x_3)$, which has rank 13.  We must therefore obtain $\Gamma$-invariant polynomials with ranks in each of the congruence classes modulo 12. Figure 2 shows the ranks achieved. Note that we do {\it not} obtain all ranks past $25=2N(\Gamma)-1$ through this process -- rank 27 is missing. However, we do obtain all ranks greater than or equal to $28$. This construction is an improvement over Theorem 1 which only guarantees that we can obtain ranks greater than or equal to $145$. 

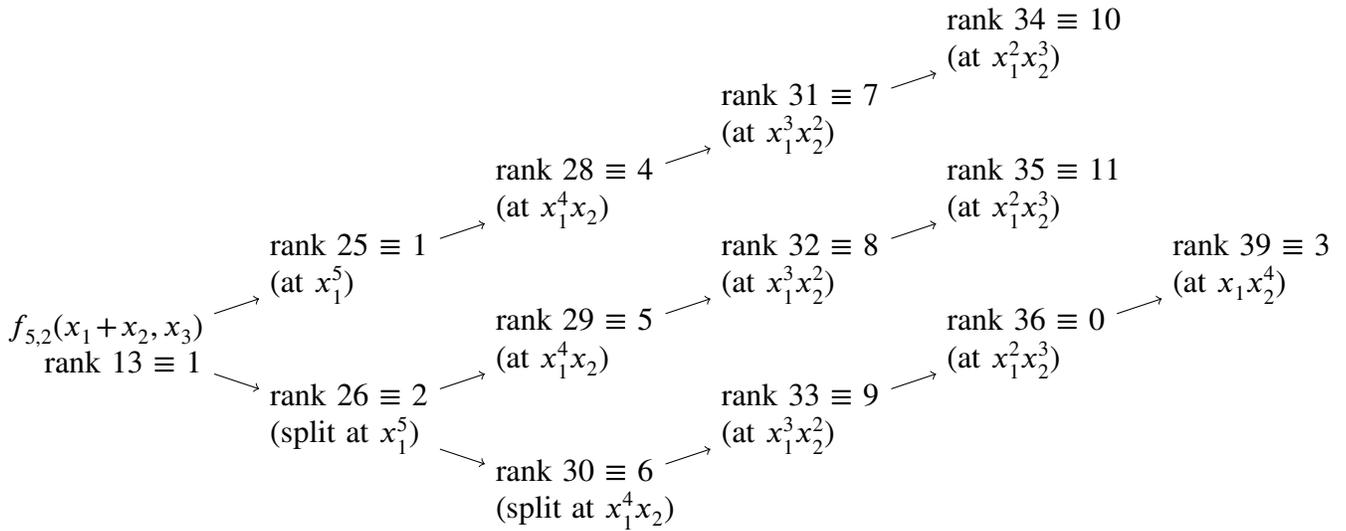
\begin{figure}[h]
\begin{tikzpicture}[grow=right]
 \node[ns] {\hspace{-18pt}  $f_{5,2}(x_1+x_2,x_3)$  rank~13~$\equiv$~1 }
    child[->] {
        node[ns] {rank~26~$\equiv$~2 (split~at~$x_1^5$)}        
            child {
                node[ns] {rank~30~$\equiv$~6 (split~at~$x_1^4 x_2$)}        
                child[edge from parent/.style={draw=none}] {
                }
                child {
                    node[ns] {rank~33~$\equiv$~9 (at~$x_1^3 x_2^2$)}        
                    child[edge from parent/.style={draw=none}] {
                    }
                        child {
                            node[ns] {rank~36~$\equiv$~0 (at~$x_1^2 x_2^3$)}        
                            child[edge from parent/.style={draw=none}] {
                            }
                            child {
                                node[ns ] {rank~39~$\equiv$~3 (at~$x_1 x_2^4$)}
                            }
                        }
                }
            }
            child {
                    node[ns] {rank~29~$\equiv$~5 (at~$x_1^4 x_2$)}        
                    child[edge from parent/.style={draw=none}] {
                    }
                    child {
                        node[ns] {rank~32~$\equiv$~8 (at~$x_1^3 x_2^2$)}        
                        child[edge from parent/.style={draw=none}] {
                        }
                        child {
                            node[ns] {rank~35~$\equiv$~11 (at~$x_1^2 x_2^3$)}
                        }
                    }
            }
    }
    child[->] {
        node[ns] {rank~25~$\equiv$~1 (at~$x_1^5$)}        
            child[edge from parent/.style={draw=none}] {
            }
            child {
                    node[ns] {rank~28~$\equiv$~4 (at~$x_1^4 x_2$)}        
                    child[edge from parent/.style={draw=none}] {
                    }
                    child {
                        node[ns] {rank~31~$\equiv$~7 (at~$x_1^3 x_2^2$)}        
                        child[edge from parent/.style={draw=none}] {
                        }
                        child {
                            node[ns] {rank~34~$\equiv$~10 (at~$x_1^2 x_2^3$)}
                        }
                    }
            }
    };
\end{tikzpicture}
\caption{Possible ranks for polynomials invariant under $\Gamma =\langle \omega I_2 \oplus \omega^2 I_1\rangle$ for $\omega$ a primitive fifth root of unity}
\end{figure}

\section{Future Directions} \label{s:future}

It is an open problem to determine precisely which $N$ are minimal embedding dimensions for nontrivial group-invariant, smooth, CR mappings from $S^{2n-1}$ to $S^{2N-1}$.  It is not even known whether D'Angelo's canonical mapping gives the smallest possible such $N$, i.e., whether there exists a non-constant, $\Gamma$-invariant, smooth, CR mapping from $S^{2n-1}$ to $S^{2N-1}$ with $N$ less than the minimal embedding dimension $N(\Gamma)$ of D'Angelo's map. Moreover, when $n=2$, our construction shows that the minimal embedding dimension can be any integer at least $2N({\Gamma})-1$. One naturally wonders if it is possible to achieve values between $N(\Gamma)$ and $2N({\Gamma})-1$.  

When $n>2$, Theorem 1 shows that for every $N \geq N(\Gamma)^2-2N(\Gamma)+2$, there exists a monomial, group-invariant CR mapping from $S^{2n-1}$ to $S^{2N-1}$ where $N$ is minimal. The examples and discussion in Section 4 suggest that a better bound is possible, and it is an interesting problem to determine all possible minimal embedding dimensions. 

Finally, we have focused on sphere mappings, but a natural extension is to generalize to hyperquadric mappings (see \cites{BEH1, BEH2,GLW, GLV} for work in this direction). 

\begin{bibdiv}
\begin{biblist}
\bib{A:auto}{article}{
   author={Alexander, H.},
   title={Holomorphic mappings from the ball and polydisc},
   journal={Math. Ann.},
   volume={209},
   date={1974},
   pages={249--256},
   issn={0025-5831},
   review={\MR{352531}},
   doi={10.1007/BF01351851},
}
\bib{BEH1}{article}{
   author={Baouendi, M. Salah},
   author={Ebenfelt, Peter},
   author={Huang, Xiaojun},
   title={Holomorphic mappings between hyperquadrics with small signature
   difference},
   journal={Amer. J. Math.},
   volume={133},
   date={2011},
   number={6},
   pages={1633--1661},
   issn={0002-9327},
   review={\MR{2863372}},
   doi={10.1353/ajm.2011.0044},
}
\bib{BEH2}{article}{
   author={Baouendi, M. S.},
   author={Ebenfelt, Peter},
   author={Huang, Xiaojun},
   title={Super-rigidity for CR embeddings of real hypersurfaces into
   hyperquadrics},
   journal={Adv. Math.},
   volume={219},
   date={2008},
   number={5},
   pages={1427--1445},
   issn={0001-8708},
   review={\MR{2458142}},
   doi={10.1016/j.aim.2008.07.001},
}
\bib{BG}{article}{
   author={Brooks, Jennifer},
   author={Grundmeier, Dusty},
   title={Sum of squares conjecture: the monomial case in $\Bbb{C}^3$},
   journal={Math. Z.},
   volume={299},
   date={2021},
   number={1-2},
   pages={919--940},
   issn={0025-5874},
   review={\MR{4311624}},
   doi={10.1007/s00209-021-02725-7},
}
\bib{D:book1}{book}{
   author={D'Angelo, John P.},
   title={Rational sphere maps},
   series={Progress in Mathematics},
   volume={341},
   publisher={Birkh\"{a}user/Springer, Cham},
   date={[2021] \copyright 2021},
   pages={xiii+233},
   isbn={978-3-030-75808-0},
   isbn={978-3-030-75809-7},
   review={\MR{4293989}},
   doi={10.1007/978-3-030-75809-7},
}
\bib{D:book2}{book}{
   author={D'Angelo, John P.},
   title={Several complex variables and the geometry of real hypersurfaces},
   series={Studies in Advanced Mathematics},
   publisher={CRC Press, Boca Raton, FL},
   date={1993},
   pages={xiv+272},
   isbn={0-8493-8272-6},
   review={\MR{1224231}},
}
\bib{D:duke}{article}{
   author={D'Angelo, John P.},
   title={Polynomial proper maps between balls},
   journal={Duke Math. J.},
   volume={57},
   date={1988},
   number={1},
   pages={211--219},
   issn={0012-7094},
   review={\MR{952233}},
   doi={10.1215/S0012-7094-88-05710-9},
}
\bib{D:invariant}{article}{
   author={D'Angelo, John P.},
   title={Invariant holomorphic mappings},
   journal={J. Geom. Anal.},
   volume={6},
   date={1996},
   number={2},
   pages={163--179},
   issn={1050-6926},
   review={\MR{1469120}},
   doi={10.1007/BF02921598},
}
\bib{DL:complexity}{article}{
   author={D'Angelo, John P.},
   author={Lebl, Ji\v{r}\'{\i}},
   title={Complexity results for CR mappings between spheres},
   journal={Internat. J. Math.},
   volume={20},
   date={2009},
   number={2},
   pages={149--166},
   issn={0129-167X},
   review={\MR{2493357}},
   doi={10.1142/S0129167X09005248},
}
\bib{DL:spaceforms}{article}{
   author={D'Angelo, John P.},
   author={Lichtblau, Daniel A.},
   title={Spherical space forms, CR mappings, and proper maps between balls},
   journal={J. Geom. Anal.},
   volume={2},
   date={1992},
   number={5},
   pages={391--415},
   issn={1050-6926},
   review={\MR{1184706}},
   doi={10.1007/BF02921298},
}
\bib{DGL}{article}{
   author={D'Angelo, John P.},
   author={Grundmeier, Dusty},
   author={Lebl, Jiri},
   title={Rational sphere maps, linear programming, and compressed sensing},
   journal={Complex Anal. Synerg.},
   volume={6},
   date={2020},
   number={1},
   pages={Paper No. 4, 14},
   issn={2524-7581},
   review={\MR{4062913}},
   doi={10.1007/s40627-020-0041-5},
}
\bib{DX1}{article}{
   author={D'Angelo, John P.},
   author={Xiao, Ming},
   title={Symmetries in CR complexity theory},
   journal={Adv. Math.},
   volume={313},
   date={2017},
   pages={590--627},
   issn={0001-8708},
   review={\MR{3649233}},
   doi={10.1016/j.aim.2017.04.014},
}
\bib{DX2}{article}{
   author={D'Angelo, John P.},
   author={Xiao, Ming},
   title={Symmetries and regularity for holomorphic maps between balls},
   journal={Math. Res. Lett.},
   volume={25},
   date={2018},
   number={5},
   pages={1389--1404},
   issn={1073-2780},
   review={\MR{3917732}},
   doi={10.4310/MRL.2018.v25.n5.a2},
}

\bib{E:sos}{article}{
   author={Ebenfelt, Peter},
   title={On the HJY gap conjecture in CR geometry vs. the SOS conjecture
   for polynomials},
   conference={
      title={Analysis and geometry in several complex variables},
   },
   book={
      series={Contemp. Math.},
      volume={681},
      publisher={Amer. Math. Soc., Providence, RI},
   },
   date={2017},
   pages={125--135},
   review={\MR{3603886}},
   doi={10.1090/conm/681},
}
\bib{EHZ:rigid}{article}{
   author={Ebenfelt, Peter},
   author={Huang, Xiaojun},
   author={Zaitsev, Dmitri},
   title={Rigidity of CR-immersions into spheres},
   journal={Comm. Anal. Geom.},
   volume={12},
   date={2004},
   number={3},
   pages={631--670},
   issn={1019-8385},
   review={\MR{2128606}},
   doi={10.4310/CAG.2004.v12.n3.a6},
}
\bib{Fa}{article}{
   author={Faran, James J.},
   title={The linearity of proper holomorphic maps between balls in the low
   codimension case},
   journal={J. Differential Geom.},
   volume={24},
   date={1986},
   number={1},
   pages={15--17},
   issn={0022-040X},
   review={\MR{857373}},
}
\bib{F:rational}{article}{
   author={Forstneri\v{c}, Franc},
   title={Extending proper holomorphic mappings of positive codimension},
   journal={Invent. Math.},
   volume={95},
   date={1989},
   number={1},
   pages={31--61},
   issn={0020-9910},
   review={\MR{969413}},
   doi={10.1007/BF01394144},
}
\bib{F:cont}{article}{
   author={Forstneri\v{c}, Franc},
   title={Proper holomorphic maps from balls},
   journal={Duke Math. J.},
   volume={53},
   date={1986},
   number={2},
   pages={427--441},
   issn={0012-7094},
   review={\MR{850544}},
   doi={10.1215/S0012-7094-86-05326-3},
}
\bib{F:survey}{article}{
   author={Forstneri\v{c}, Franc},
   title={Proper holomorphic mappings: a survey},
   conference={
      title={Several complex variables},
      address={Stockholm},
      date={1987/1988},
   },
   book={
      series={Math. Notes},
      volume={38},
      publisher={Princeton Univ. Press, Princeton, NJ},
   },
   date={1993},
   pages={297--363},
   review={\MR{1207867}},
}
\bib{G:thesis}{book}{
   author={Grundmeier, Dusty Edward},
   title={Group-invariant CR mappings},
   note={Thesis (Ph.D.)--University of Illinois at Urbana-Champaign},
   publisher={ProQuest LLC, Ann Arbor, MI},
   date={2011},
   pages={74},
   isbn={978-1124-96540-6},
   review={\MR{2949803}},
}
\bib{G:sigpairs}{article}{
   author={Grundmeier, Dusty},
   title={Signature pairs for group-invariant Hermitian polynomials},
   journal={Internat. J. Math.},
   volume={22},
   date={2011},
   number={3},
   pages={311--343},
   issn={0129-167X},
   review={\MR{2782691}},
   doi={10.1142/S0129167X11006775},
}
\bib{GLV}{article}{
   author={Grundmeier, Dusty},
   author={Lebl, Ji\v{r}\'{\i}},
   author={Vivas, Liz},
   title={Bounding the rank of Hermitian forms and rigidity for CR mappings
   of hyperquadrics},
   journal={Math. Ann.},
   volume={358},
   date={2014},
   number={3-4},
   pages={1059--1089},
   issn={0025-5831},
   review={\MR{3175150}},
   doi={10.1007/s00208-013-0989-z},
}
\bib{GLW}{article}{
   author={Grundmeier, Dusty},
   author={Linsuain, Kemen},
   author={Whitaker, Brendan},
   title={Invariant CR mappings between hyperquadrics},
   journal={Illinois J. Math.},
   volume={62},
   date={2018},
   number={1-4},
   pages={321--340},
   issn={0019-2082},
   review={\MR{3922419}},
   doi={10.1215/ijm/1552442665},
}
\bib{GH:sos}{article}{
   author={Grundmeier, Dusty},
   author={Halfpap Kacmarcik, Jennifer},
   title={An application of Macaulay's estimate to sums of squares problems
   in several complex variables},
   journal={Proc. Amer. Math. Soc.},
   volume={143},
   date={2015},
   number={4},
   pages={1411--1422},
   issn={0002-9939},
   review={\MR{3314056}},
   doi={10.1090/S0002-9939-2014-12367-7},
}

\bib{H:lemma}{article}{
   author={Huang, Xiaojun},
   title={On a linearity problem for proper holomorphic maps between balls
   in complex spaces of different dimensions},
   journal={J. Differential Geom.},
   volume={51},
   date={1999},
   number={1},
   pages={13--33},
   issn={0022-040X},
   review={\MR{1703603}},
}
\bib{HJ:gaps}{article}{
   author={Huang, Xiaojun},
   author={Ji, Shanyu},
   title={Mapping $\bold B^n$ into $\bold B^{2n-1}$},
   journal={Invent. Math.},
   volume={145},
   date={2001},
   number={2},
   pages={219--250},
   issn={0020-9910},
   review={\MR{1872546}},
   doi={10.1007/s002220100140},
}
\bib{HJY}{article}{
   author={Huang, Xiaojun},
   author={Ji, Shanyu},
   author={Yin, Wanke},
   title={On the third gap for proper holomorphic maps between balls},
   journal={Math. Ann.},
   volume={358},
   date={2014},
   number={1-2},
   pages={115--142},
   issn={0025-5831},
   review={\MR{3157993}},
   doi={10.1007/s00208-013-0952-z},
}
\bib{Lichtblau}{article}{
   author={Lichtblau, Daniel},
   title={Invariant proper holomorphic maps between balls},
   journal={Indiana Univ. Math. J.},
   volume={41},
   date={1992},
   number={1},
   pages={213--231},
   issn={0022-2518},
   review={\MR{1160910}},
   doi={10.1512/iumj.1992.41.41012},
}
\bib{W}{article}{
   author={Webster, S. M.},
   title={On mapping an $n$-ball into an $(n+1)$-ball in complex spaces},
   journal={Pacific J. Math.},
   volume={81},
   date={1979},
   number={1},
   pages={267--272},
   issn={0030-8730},
   review={\MR{543749}},
}
\end{biblist}
\end{bibdiv}

\end{document}